\newcommand{\cD}{\mathcal{D}}
\newcommand{\R}{{\mathbb R}}
\newcommand{\vp}{\varphi}
\newcommand{\m}{{\mu}}
\newtheorem{definition}{Definition}[section]
\newtheorem{remark}{Remark}[section]
\newtheorem{theorem}{Theorem}[section]
\newtheorem{lemma}[theorem]{Lemma}
\newtheorem{proposition}{Proposition}[section]
\newtheorem{corollary}{Corollary}[section]
\numberwithin{equation}{section}
\def\vs1{\vspace{1ex}}
\def\O{\Omega}
\def\pa{\partial}
\def\dy{\displaystyle}
\def\be{\begin{equation}}
\def\ba{\begin{array}}
\def\ea{\end{array}}
\def\ee{\end{equation}}
\begin{document}
\title{\bf\large On the global $\,W^{2,\,q}\,$ regularity for nonlinear $N-$systems \\
of the p-Laplacian type in $n$ space variables}
\author{ H.~Beir\~ao da Veiga\ and \ F.~Crispo}

\date{}
\maketitle
\begin{abstract} We consider the Dirichlet boundary value problem for nonlinear
$N$-systems of partial differential equations with $p$-growth,
$\,1<\,p<\,2\,,$ in the $n-$dimensional case. For clearness, we
confine ourselves to a particularly representative case, the well known $p$-laplacian system.\par%
We are interested in regularity results, \emph{up to the boundary},
for the second order derivatives of the solution.  We prove
$W^{2,q}$-global regularity results, for arbitrarily large values of
$q$. In turn, the regularity achieved implies the H\"{o}lder
continuity of the gradient of the solution. It is worth noting that
we cover the singular case $\,\mu=\,0\,$. See Theorem \ref{teoremaq}
below.
\end{abstract}

\vspace{0.2cm}

\noindent \textbf{Keywords:} $p$-Laplacian systems, regularity up to
the boundary, full regularity.

 \section{Introduction}
We are concerned with the regularity problem for solutions of
nonlinear systems of partial differential equations with
$p$-structure, $p \in (1,\,2]\,$, under Dirichlet boundary
conditions. In order to emphasize the main ideas we confine
ourselves to the following representative case, where  $\mu\,\geq
\,0$ is a fixed
constant:%
\begin{equation}\label{NSC}\left\{
\begin{array}{ll}\vspace{1ex}
-\,\nabla \cdot
\,\big(\,(\,\m+|\,\nabla\,u|\,)^{p-2}\,\nabla\,u\,\big)=\,f \ \mbox{
in } \O\,,
\\%
u=\,0\ \mbox{ on } \partial \O\,.
\end{array}\right .
\end{equation}
The vector field $u=(u_1(x),\cdots, u_N(x))$, $N> 1$, is defined on
a bounded domain $\,\O \subset \R^n$, $n\geq 3$. When $\mu=0\,$, the
system \eqref{NSC} is the well-known $p$-Laplacian system. \par%
It is worth noting that our interests concern global (up to the
boundary), full regularity for the second derivatives of the
solutions. Our results also hold  in the singular case $\mu=0$. For
any bounded and sufficiently smooth domain $\O$, we prove
$W^{2,q}(\O)$ regularity, for any $q\geq 2$. Therefore, we get, as a
by product, the $\alpha$- H\"{o}lder continuity, up to the boundary,
of the gradient of the solution, for any $\alpha <\, 1\,$. The
results are obtained for $\,p\,$ belonging to intervals $[C,2)$,
where $C$ are suitable constants, whose expression may be explicitly
calculated. In particular, if $\,\O\,$ is convex, solutions belong
to $W^{2,2}(\O)$ for any
$\,1\,<p\,\leq\, 2\,.$\par%
As usual, weak solutions are defined as follows (for notation and
more precise statements see the sequel).
\begin{definition}\label{noit}
Assume that $f \in W^{-1\,,p'}(\Omega)$. We say that $u$ is a
{\rm{weak solution}} of problem \eqref{NSC}
 if $\,u\,\in\, W^{1,p}_0(\O)$ satisfies
\begin{equation}\label{buf2}
\int_{\O}\ \left(\,\m+|\,\nabla\,u|\,\right)^{p-2}\,\nabla\,u\,
\cdot \nabla\, \vp \,dx\ =\,\int_{\O}\, f\, \cdot\, \vp \,dx\,,
\end{equation}
for all $\vp \in \,W^{1,p}_0(\O)$.
\end{definition}
It is immediate to verify that, if $\,\mu >\,0\,,$ sufficiently
regular weak solutions to the problem \eqref{NSC} satisfy%
\be\label{munotz}-\Delta u-(p-2) \frac{\nabla u\cdot\nabla\nabla
u\cdot \nabla u}{(\mu+\,|\nabla
u|)\,|\nabla u|}= f\left(\mu+|\nabla\,u|\right)^{2-p}\,.\ee%
Here, and in the following, we use the notation $\nabla
u\cdot\nabla\nabla u\cdot \nabla u$ to indicate the vector whose
$i^{\rm th}$ component is $\nabla u\cdot \left( \pa_j\,\nabla
u\right) \pa_j\,u_i=(\pa_l\,u_k)
\,(\pa^2_{j\,l}\,\,u_k)\,(\pa_j\,u_i)\,.$\par%

\vspace{0.2cm}

In the sequel we start by proving the existence of a (unique) strong
solution
$$
u \in \,  W^{1,p}_0(\O)\cap \, W^{2,q}(\O)\,,%
$$
of problem \eqref{munotz}, under homogeneous Dirichlet boundary
conditions, in the case $\mu>\,0\,$ (see Theorem
\ref{teoremaqreg1}). Clearly, $u$ solves \eqref{buf2}. Furthermore,
we prove that the $\, W^{2,q}(\O)$ norms of the above strong
solutions are uniformly bounded with respect to $\mu\,$. This allows
us, by passing to the limit as $\mu \rightarrow \,0\,,$ to extend
the $W^{2,q}(\O)$ regularity result to weak solutions of problem
\eqref{NSC} in the case $\mu=\,0\,$ (see Theorem \ref{teoremaq}).

\vspace{0.2cm}

The regularity issue for systems like \eqref{NSC} has received
substantial attention, mostly concerned with the scalar case
($N=1$), and with $C^{1,\alpha}_{loc}$-regularity. Here and in the
following, by local regularity we mean interior regularity. The
pioneering result dates back to Ural'tseva \cite{ura}, where, for
$p>2$ and $N=1$, the author proves $C^{1,\beta}_{loc}$-regularity
for a suitable exponent $\beta$. Still in the case $\,N=\,1\,$ we
recall the following contributions. In \cite{Tolk2} the author
proves $W^{2,p}_{loc}$-regularity for any $p<2$, and also
$W^{2,2}_{loc}$-regularity, for $p>2$. In \cite{Lieb88}, for
$p>2\,,$ the author proves $C^{1,\beta}$-regularity up to the
boundary, in $\Omega\subset \R^n\,$. In \cite{Liu} the author shows,
for any $p\,\in (1,2)\,,$ $W^{2,2}\cap C^{1,\alpha}$-regularity up
to the boundary, in
$\Omega\subset \R^2$.\par%
For systems (solutions are $N$-dimensional vector fields, $N>1$), we
recall \cite{acerbi} for $p\in (1,2)$, \cite{GM} and \cite{uhlenb}
for $p>2$, and \cite{Hamburger} for any $p>1$. The results proved in
papers \cite{acerbi}, \cite{GM} and \cite{uhlenb} are local.
Moreover all these papers deal only with homogeneous systems and the
techniques, sometimes quite involved, seem not to be directly
applicable to the non-homogeneous setting. In particular,
\cite{acerbi} is the only paper in which the $L^2_{loc}$-regularity
of second derivatives is considered. The results below are, in the
non-scalar case, the first regularity results up to the
boundary, for the second derivatives of solutions. \par%
 For related results and for an extensive bibliography we also
refer to papers \cite{AM}, \cite{BDVCRIplap}, \cite{DB}, \cite{DBM},
\cite{FuM}, \cite{FS},  \cite{Lieb}, \cite{MP}, \cite{Ming} and
references therein.
\par%
We observe that we do not consider a more general dependence on
$\nabla u$, as for instance $\varphi(|\nabla u|)\, \nabla u$, under
 suitable assumptions on the scalar function $\varphi$, just to
emphasize the core aspects of the results and to avoid additional
technicalities. For the same
 reason we avoid the introduction of lower order terms. Note that another, very
similar, representative case can be obtained with the regular term
$(\,\m+|\,\nabla\,u|^2\,)^{\frac{p-2}{2}}\,$ in place of
$(\,\m+|\,\nabla\,u|\,)^{p-2}$ in \eqref{NSC}. This latter function
is only Lipschitz continuous, hence in this case it seems not
possible to get stronger regularity results.  Finally one could also
extend the results to non-homogeneous Dirichlet boundary
conditions, if the boundary data belongs to a suitable $W^{2,q}$-space.\par%

\vspace{0.2cm}

\begin{remark}
\rm{Different, more intricate, proofs of Theorem \ref{teoremaq} and
its corollaries were given in \cite{BDVCRIplap}, in the particular
case $\,N=\,n=\,3\,.$ In \cite{BDVCRIplap} we also consider the case
where $\,\nabla\, u\,$ is replaced by $\cD\, u\,=\,\frac
12\,(\,\nabla\, u\,+\,\nabla \,u^T\,)\,,$ and $\,p
\in\,(1,\,+\infty)\,.$}
\end{remark}
\section{Notation and statement of the main results}
Throughout this paper we denote by $\Omega$ a bounded
$n$-dimensional domain, $n\geq 3$, with smooth boundary, which we
assume of class $C^2$, and we consider the usual homogeneous
Dirichlet boundary conditions \be\label{diri}
u_{|\partial\Omega}=0.\ee
\par By
$L^p(\O)$ and $W^{m,p}(\O)$, $m$ nonnegative integer and
$p\in(1,+\infty)$, we denote the usual Lebesgue and Sobolev spaces,
with the standard norms $\|\cdot\|_{L^p(\O)}$ and
$\|\,\cdot\,\|_{W^{m,p}(\O)}$, respectively. We usually denote the
above norms by $\|\cdot\|_{p}$ and $\|\,\cdot\,\|_{m,p}$, when the
domain is clear. Further, we set $\|\cdot\|=\|\cdot\|_{2}$. We
denote by $W^{1,p}_0(\O)$ the closure in $W^{1,p}(\O)$ of
$C^\infty_0(\O)$ and by $W^{-1,p'}(\O)$, $p'=\,p/(p-1)$, the strong
dual of $W^{1,p}_0(\O)$ with norm $\|\,\cdot\,\|_{-1,p'}$.
\par%
In notation concerning  norms and functional spaces, we do not
distinguish between scalar and vector fields. For instance
$L^p(\Omega;\R^N)= [L^p(\O)]^N$, $N>1$, is simply $L^p(\O)$.
\par
We use the summation convention on repeated indexes. For any given
pair of matrices $B$ and $C$ in $R^{Nn}$ (linear space of $N\times
n$-matrices), we write $B\cdot C\equiv B_{ij}\,C_{ij}$.
\par
We denote by the symbols $c$, $c_1$, $c_2$, etc., positive constants
that may depend on $\mu$; by capital letters, $C$, $C_1$, $C_2$,
etc., we denote positive constants independent of $\mu
\geq\,0\,$(eventually, $\,\mu\,$ bounded from above). The same
symbol $c$ or $C$ may denote different constants, even in the same
equation.

\par%
 We
set $\partial_i \,u=\,\frac{\pa\, u}{\pa\, x_i}$,
$\,\partial_{ij}^2\,u=\,\frac{\pa^2\, u}{\pa\, x_i\pa\,x_j}$.
Moreover we set $(\nabla u)_{ij}=\pa_j\,u_i$. We denote by $D^2u$
the set of all the second partial derivatives of $u$. Moreover we
set \be\label{SDstar} |\,D^2u\,|^2:=\sum_ {i=1}^N\sum_{j,h=1}
^n\!\!\left|\,\pa_{jh}^2\,u_i\,\right|^2\,.\ee%
\par Let us introduce the definition of weak solution of
 problem \eqref{NSC}--\eqref{diri}.
\begin{definition}\label{noitaa}
Assume that $f \in W^{-1,p'}(\O)$. We say that $u$ is a {\rm{weak
solution}} of problem \eqref{NSC}--\eqref{diri},
 if $\,u\,\in\, W^{1,\,p}(\O)$ satisfies
\begin{equation}\label{buf2aa}
\int_{\O}\ \left(\,\m+|\,\nabla\,u|\,\right)^{p-2}\,\nabla\,u\,\cdot
\nabla\, \vp \,dx\ =\,\int_{\O}\, f\, \cdot\,  \vp \,dx\,,
\end{equation}
for all $\vp \in \,W_0^{1,p}(\O)$.
\end{definition}
\par We recall that the existence and uniqueness of a weak solution
can be obtained by appealing to the theory of monotone operators,
following J.-L. Lions \cite{lions}.
\par Before stating our main results, let us recall two well known inequalities for the Laplace
operator.
The first, namely%
\be\label{lad2} \|\,D^2\,v\,\|\leq C_1\,\|\,\Delta\, v\,\|\,,\ee
holds for any function $v\in W^{2,2}(\O)\cap W_0^{1,2}(\O)\,$, with
$C_1=C_1(\Omega)\,$. Note that $C_1=1$ if $\O$ is convex. For
details we refer to \cite{Lad} (Chapter I, estimate (20)). The
second kind of estimates which we are going to use says that
\be\label{ladaq}\|D^2\,v\|_{q}\leq \,C_2\|\Delta v\|_q\,,\ee%
for $v\in W^{2,q}(\O)\cap W_0^{1,q}(\O)$, $q\geq 2$, where the
constant $C_2$ depends on $q$ and $\O$. It relies on standard
estimates for solution of the Dirichlet problem for the Poisson
equation.  Actually, there are two constants $K_1$ and
$K_2$, independent of $q$, such that%
\be\label{yud} K_1\, q\leq C_2\leq K_2\, q\,.\ee%
 Similarly, one has
\be\label{lad1q}\|\,\,v\,\|_{\,2,q}\leq \,C\|\Delta v\|_q\,,\ee
where the constant $C$ depends on $q$ and $\O$. For further details
we refer to \cite{kos} and \cite{yud}.
\par%
We set \be\label{rq}r(q)=\left\{\begin{array}{ll}\dy
 \frac{nq}{n(p-1)+q(2-p)}
&\dy \mbox{ if }\ q\in [2,\,n]\,,\\
\hskip1cm q & \dy \mbox{ if }\ q \geq\, n\,.
 \end{array}\right .\ee
Note that $r(q)>q$ for any $q<n$. Clearly, in \eqref{rq}, $r(q)=n$
in booth cases. Our main results is the following.
\begin{theorem}\label{teoremaq}
Let $p\in (1,2]$ satisfy $\,(2-p)\,C_2<\,1\,$, where $C_2$ is given
by \eqref{ladaq}. Assume that $\mu\geq 0$. Let $f\in L^{r(q)}(\O)$
for some $q\geq\,2\,,$ $\,q\not= n\,,$ and let $u$ be the unique
weak solution of problem \eqref{NSC}. Then $u$ belongs to
$W^{2,q}(\O)$. Moreover, the following estimate holds
\begin{equation}\label{dnq}
 \|u\|_{2,q}\leq C
 \,\left(\|f\|_q+\|f\|_{r(q)}^\frac{1}{p-1}\right)\,.
\end{equation}
\end{theorem}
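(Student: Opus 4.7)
The plan is to follow the roadmap the authors themselves outline: establish a priori $W^{2,q}$ estimates for the strong solutions $u^\mu$ of the regularized problem \eqref{munotz} with $\mu>0$ (existence being guaranteed by Theorem \ref{teoremaqreg1}) that are uniform in $\mu \in [0,M]$, and then pass to the weak limit as $\mu\to 0^+$.

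Working first with $\mu>0$, I rewrite \eqref{munotz} as
$$-\Delta u = (p-2)\frac{\nabla u\cdot\nabla\nabla u\cdot\nabla u}{(\mu+|\nabla u|)|\nabla u|}+f\,(\mu+|\nabla u|)^{2-p}.$$
The first term on the right is pointwise dominated by $(2-p)|D^2u|$, since the scalar factor is bounded by $1$ in absolute value; hence taking $L^q$ norms and invoking \eqref{ladaq} produces
$$\|D^2u\|_q\le C_2(2-p)\|D^2u\|_q+C_2\,\|f\,(\mu+|\nabla u|)^{2-p}\|_q.$$
The structural hypothesis $(2-p)C_2<1$ allows the Hessian term to be absorbed into the left-hand side, giving
$$\|D^2u\|_q\le \frac{C_2}{1-(2-p)C_2}\,\|f\,(\mu+|\nabla u|)^{2-p}\|_q.$$

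To handle the right-hand side, I apply H\"older's inequality with conjugate exponents $r(q)$ and $s:=q^*/(2-p)$, where $q^*=nq/(n-q)$ is the Sobolev exponent of $W^{1,q}$ for $q<n$ (for $q>n$ I replace $L^{q^*}$ by $L^\infty$ via $W^{2,q}\hookrightarrow W^{1,\infty}$, which is precisely why $q=n$ is excluded), obtaining
$$\|f\,(\mu+|\nabla u|)^{2-p}\|_q\le \|f\|_{r(q)}\bigl(\mu|\Omega|^{1/q^*}+\|\nabla u\|_{q^*}\bigr)^{2-p}.$$
A direct verification that $1/q=1/r(q)+(2-p)/q^*$ returns exactly formula \eqref{rq}. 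Combining with the Sobolev embedding $\|\nabla u\|_{q^*}\lesssim\|u\|_{2,q}$, with \eqref{lad1q} to recover $\|u\|_{2,q}$ from $\|\Delta u\|_q$, and with the $W^{1,p}$ energy bound (testing \eqref{buf2} with $u$) to dominate the lower-order norms by $\|f\|_{-1,p'}\lesssim \|f\|_q$, I arrive at a self-improving inequality of the form
$$\|u\|_{2,q}\le C\|f\|_q+C\|f\|_{r(q)}\,(1+\|u\|_{2,q})^{2-p}.$$
Since $2-p<1$, Young's inequality with a small parameter absorbs the nonlinear piece and yields \eqref{dnq} with constants independent of $\mu\in[0,M]$. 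The final step is the limit $\mu_k\to 0^+$: the uniform bound gives $u^{\mu_k}\rightharpoonup u$ in $W^{2,q}(\Omega)\cap W^{1,p}_0(\Omega)$, with $\nabla u^{\mu_k}\to\nabla u$ strongly in $L^p$ by Rellich; a.e.\ convergence of the flux $(\mu_k+|\nabla u^{\mu_k}|)^{p-2}\nabla u^{\mu_k}\to|\nabla u|^{p-2}\nabla u$ together with the equi-integrability supplied by the uniform $W^{2,q}$ bound allows passage to the limit in \eqref{buf2} through Vitali's theorem, while weak lower semicontinuity transfers \eqref{dnq} to $u$, which by monotonicity is \emph{the} unique weak solution.

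The principal obstacle is closing the self-improving inequality so that the arithmetic of Sobolev and H\"older exponents yields \emph{precisely} the formula \eqref{rq} and so that the lower-order data terms reassemble into the compact expression $\|f\|_q+\|f\|_{r(q)}^{1/(p-1)}$ of \eqref{dnq}. A secondary subtlety is that the flux density $(\mu+|\nabla u|)^{p-2}$ is singular on $\{\nabla u=0\}$ as $\mu\to 0$, so the passage to the limit cannot rely merely on $W^{1,p}$-compactness from the energy estimate; the higher, $\mu$-uniform regularity obtained above is what legitimizes it.
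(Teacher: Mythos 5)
Your exponent bookkeeping is sound: the identity $1/q=1/r(q)+(2-p)/q^*$ does reproduce \eqref{rq}, the absorption of $(2-p)C_2\|D^2u\|_q$ under the hypothesis $(2-p)C_2<1$ is exactly the mechanism the paper uses (its constant $\delta=1-(2-p)C_2$), and the Young-inequality step with exponents $\tfrac{1}{2-p}$ and $\tfrac{1}{p-1}$ correctly produces the term $\|f\|_{r(q)}^{1/(p-1)}$ in \eqref{dnq}. The limit $\mu\to0^+$ via a.e.\ convergence and Vitali is a workable variant of what the paper does (the paper instead uses the pointwise estimate \eqref{tensorS1} together with strong $W^{1,p}$ convergence, which is more direct and needs no equi-integrability discussion). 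So the analytic core of your computation matches the paper's.

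The genuine gap is the existence of the strong solution for $\mu>0$. You cite Theorem \ref{teoremaqreg1} for it, but that theorem is precisely the intermediate result whose proof constitutes the bulk of the paper's argument for Theorem \ref{teoremaq}; moreover it already contains the uniform estimate \eqref{dnqpf}, so invoking it makes your a priori computation redundant while leaving the real work undone. Without it, your ``self-improving inequality'' is circular: to absorb $\epsilon\|u\|_{2,q}$ into the left-hand side you must already know $\|u\|_{2,q}<\infty$, which is exactly what is to be proved. The paper resolves this by running your estimate not on $u$ itself but on the linearized map $F$ of \eqref{rega1}, where $u=F(v)$ solves a Poisson problem with data in $L^q$ and hence lies in $W^{2,q}$ by linear elliptic theory; the computation then shows the invariance $F(K)\subset K$ of the ball $K=\{\|\Delta v\|_q\le R\}$, and a Schauder-type fixed point theorem (Theorem \ref{fixo}) converts invariance plus continuity of $F$ in the weaker $L^q$ topology into existence of a fixed point. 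That continuity verification (Proposition \ref{opo}, via the strong $L^t$ convergence of the coefficients $A(v_m)$ in Lemma \ref{limum} and the weak $L^s$ convergence of the products $A(v_m)\,\partial^2 v_m$ in Lemma \ref{marras}) is a nontrivial component that your proposal omits entirely; some construction of this kind (fixed point, Galerkin, or continuation) is indispensable before the a priori estimate can be put to use.
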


\begin{corollary}\label{corollaryq2}
Let  $p$, $\mu$ and $f$ be as in Theorem \ref{teoremaq}. Then, if
$\,q>n$, the weak solution of problem \eqref{NSC} belongs to $
C^{1,\alpha}(\overline\O)$, for $\alpha= 1-\frac nq$.
\end{corollary}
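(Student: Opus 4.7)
The plan is to derive the corollary as a direct consequence of Theorem \ref{teoremaq} combined with the Sobolev--Morrey embedding theorem. First, since $p \in (1,2]$ satisfies $(2-p)C_2 < 1$, $\mu \geq 0$, and $f \in L^{r(q)}(\Omega)$ with $q > n$, Theorem \ref{teoremaq} applies and yields $u \in W^{2,q}(\Omega)$, together with the quantitative bound \eqref{dnq}. Note that for $q > n$ one has $r(q) = q$ by definition \eqref{rq}, so the integrability assumption on $f$ reduces simply to $f \in L^q(\Omega)$.

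Next, I would observe that $u \in W^{2,q}(\Omega)$ means in particular that each component of $\nabla u$ belongs to $W^{1,q}(\Omega)$. Since $\Omega$ is a bounded domain with $C^2$ boundary, the Morrey embedding
\[
W^{1,q}(\Omega) \hookrightarrow C^{0,\alpha}(\overline{\Omega}), \qquad \alpha = 1 - \frac{n}{q},
\]
is available whenever $q > n$. Applying it componentwise to $\nabla u$ gives $\nabla u \in C^{0,\alpha}(\overline{\Omega})$, that is, $u \in C^{1,\alpha}(\overline{\Omega})$ with the stated exponent. The boundary continuity of the gradient is provided, rather than merely the interior one, precisely because Theorem \ref{teoremaq} furnishes global, not only local, second-order regularity.

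There is no genuine obstacle: the whole content of the corollary is packaged inside the global $W^{2,q}$ bound \eqref{dnq}, and the embedding step is standard. If one wishes to record a quantitative estimate, the Morrey embedding combined with \eqref{dnq} yields
\[
\|u\|_{C^{1,\alpha}(\overline{\Omega})} \leq C\bigl(\|f\|_q + \|f\|_q^{1/(p-1)}\bigr),
\]
with $C$ depending on $\Omega$, $n$, $p$, $q$.
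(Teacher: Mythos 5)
Your proof is correct and follows exactly the route the paper takes: the authors likewise deduce the corollary directly from Theorem \ref{teoremaq} via the regularity of the domain and the Sobolev (Morrey) embedding $W^{1,q}(\Omega)\hookrightarrow C^{0,1-\frac{n}{q}}(\overline{\Omega})$ for $q>n$. Your additional observations (that $r(q)=q$ for $q>n$ and the resulting quantitative bound) are accurate refinements of the same argument.
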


In particular, when $q=2$, one has the following corollary.

\begin{corollary}\label{teorema}
Let $p\in (1,2]$ satisfy $(2-p)\,C_1<\,1\,$, where $C_1$ is given by
\eqref{lad2}. Assume that $\mu\geq 0$. Let $f \in L^{r(2)}(\Omega)$
and let $u$ be the unique weak solution $u$ of problem \eqref{NSC}.
Then $u$ belongs to $W^{2,2}(\O)$. Moreover, there is a constant $C$
such that
\begin{equation}\label{dn}
 \|\,u\,\|_{2,2}\leq C\left(\|f\|+\|f\|_{r(2)}^\frac{1}{p-1}\right)\,.
 \end{equation}
If $\,\O\,$ is convex the result holds for any $\,1\,<p\,\leq\,
2\,.$
\end{corollary}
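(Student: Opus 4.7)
The plan is to reduce the singular case $\m=0$ to the regularized problem with $\m>0$ via a limiting argument. For $\m>0$, Theorem \ref{teoremaqreg1} provides a strong solution $u=u_\m\in W^{2,2}(\O)\cap W_0^{1,p}(\O)$ of the reformulation \eqref{munotz}. The heart of the matter is to establish a $W^{2,2}$-estimate uniform in $\m>0$; once this is in hand, a monotonicity argument lets me pass to the limit $\m\to 0^+$ and identify the limit as the unique weak solution of \eqref{NSC} in the case $\m=0$, so that the bound \eqref{dn} persists.

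For the uniform bound I take the $L^2$-norm of \eqref{munotz}. The vector coming from the quadratic term is bounded pointwise by $|D^2u|$: indeed $|\nabla u\cdot\nabla\nabla u\cdot\nabla u|\le |\nabla u|^2|D^2u|$ and $|\nabla u|^2/[(\m+|\nabla u|)|\nabla u|]\le 1$. Consequently
\[
\|\Delta u\|\le (2-p)\,\|D^2u\| + \|f(\m+|\nabla u|)^{2-p}\|.
\]
Estimate \eqref{lad2} replaces $\|D^2u\|$ by $C_1\|\Delta u\|$, and the hypothesis $(2-p)C_1<1$ lets me absorb the first term, yielding
\[
[1-(2-p)C_1]\,\|\Delta u\|\le \|f(\m+|\nabla u|)^{2-p}\|.
\]
When $\O$ is convex, $C_1=1$ and the condition reduces to $p>1$, which accounts for the last sentence of the corollary.

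To handle the right-hand side I apply H\"older's inequality with the conjugate pair $(r(2),\beta)$ determined by $1/r(2)+1/\beta=1/2$ and $(2-p)\beta=2^{*}=2n/(n-2)$. A direct check shows this forces $r(2)=2n/[n(p-1)+2(2-p)]$, matching \eqref{rq} at $q=2$. Thus
\[
\|f(\m+|\nabla u|)^{2-p}\|\le \|f\|_{r(2)}\,\|\m+|\nabla u|\|_{2^{*}}^{2-p}.
\]
Since $u\in W_0^{1,2}\cap W^{2,2}$, the Sobolev embedding applied to $\nabla u$, combined with \eqref{lad2} and the bound $\|\nabla u\|\le C\|\Delta u\|$ (obtained from $\|\nabla u\|^2=-\int u\,\Delta u\le C_P\|\nabla u\|\|\Delta u\|$ via Poincar\'e on $u$), gives $\|\nabla u\|_{2^{*}}\le C\|\Delta u\|$. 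Hence
\[
\|\Delta u\|\le C\,\|f\|_{r(2)}\,(\m+\|\Delta u\|)^{2-p}.
\]
Using $(a+b)^{2-p}\le a^{2-p}+b^{2-p}$ (valid since $2-p\le 1$) and Young's inequality with exponents $1/(2-p)$ and $1/(p-1)$ to bound $C\|f\|_{r(2)}\|\Delta u\|^{2-p}$ by $\epsilon\|\Delta u\|+c_\epsilon\|f\|_{r(2)}^{1/(p-1)}$, absorbing $\epsilon\|\Delta u\|$, using the embedding $L^{r(2)}\hookrightarrow L^2$ to control the remaining $\m$-linear term, and using $\m^{2-p}\to 0$ as $\m\to 0^+$, I arrive at \eqref{dn} uniformly in $\m$; a final appeal to \eqref{lad2} delivers the full $W^{2,2}$ bound.

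The main obstacle I anticipate is the passage to the limit. The uniform $W^{2,2}$ bound supplies a weakly convergent subsequence $u_{\m_k}\rightharpoonup u$ in $W^{2,2}$ with $\nabla u_{\m_k}\to\nabla u$ strongly in $L^2$ by Rellich. One must then identify the weak $L^{p'}$-limit of the nonlinear flux $(\m_k+|\nabla u_{\m_k}|)^{p-2}\nabla u_{\m_k}$ with $|\nabla u|^{p-2}\nabla u$: a Minty-type monotonicity argument, exploiting the strong gradient convergence, should close this point, after which uniqueness of the weak solution of \eqref{NSC} (via the monotone operator theory cited after Definition \ref{noitaa}) forces the whole family to converge.
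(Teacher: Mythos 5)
Your a priori estimate is essentially the computation the paper performs: the pointwise bound $|\nabla u\cdot\nabla\nabla u\cdot\nabla u|\le |\nabla u|^2|D^2u|$ is the paper's appendix estimate, the absorption via \eqref{lad2} and $(2-p)C_1<1$ is the paper's $\delta=1-(2-p)C_1>0$, and your H\"older/Sobolev/Young treatment of $\|f(\m+|\nabla u|)^{2-p}\|$ reproduces the role of \eqref{c7} and \eqref{cmha2} at $q=2$ (your exponent bookkeeping $1/r(2)+1/\beta=1/2$, $(2-p)\beta=2^*$ is correct). The limit $\m\to 0^+$ is also fine, though the paper closes it more simply than a Minty argument: the Lipschitz-type inequality \eqref{tensorS1} together with the strong $W^{1,p}$ convergence of $u_\m$ passes the flux to the limit directly, as in \eqref{aaaa}.

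The genuine gap is the existence step for $\m>0$. You invoke Theorem \ref{teoremaqreg1} to produce the strong solution $u_\m\in W^{2,2}$, but that theorem requires $q>2$, $f\in L^{r(q)}$, and above all the \emph{stronger} structural condition $(2-p)\,C_2<1$. The corollary assumes only $(2-p)\,C_1<1$ and $f\in L^{r(2)}$; since $C_1\le C_2$ (and $C_1=1$ on convex domains while $C_2$ is not), there is a nonempty range of $p$ --- indeed the entire range $1<p\le 2$ in the convex case, which is the whole point of the last sentence of the corollary --- in which Theorem \ref{teoremaqreg1} is simply not applicable and no regularized solution is available to you. An a priori estimate on a solution whose existence has not been established proves nothing. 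The paper's proof avoids this by re-running the fixed-point construction of Theorem \ref{teoremaqreg1} at $q=2$ with $C_2$ replaced everywhere by $C_1$: the set $K=\{v:\|\Delta v\|\le R\}$, the invariance $F(K)\subset K$ (which is exactly your energy estimate, applied to the linearized problem \eqref{rega1} rather than to \eqref{munotz} itself), and Theorem \ref{fixo} then yield existence under the weaker hypothesis. You need to add this step, or an equivalent approximation argument, before your uniform bound and limit procedure can start.
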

It is worth noting that in the limit case $p=2$, when system
\eqref{NSC} reduces to the Poisson equations, we recover exactly the
well known result
$$ \|\,u\,\|_{2,q}\leq C\,\|f\|_q\,,$$
since $\,r(q)=\,q\,$ for $\,p=\,2\,$.\par%
Note that in estimates \eqref{dnq} and \eqref{dn}, the terms $\|f\|$
and $\|f\|_q$ can be replaced by $1$.
\begin{remark} \rm{One could also consider the case where $f\in L^n(\Omega)$. We omit this
further case and leave it to the interested reader. In this regard
we stress that our interest mostly concerns the maximal
integrability of the second derivatives of the solution.}
\end{remark}

\section{Proof of Theorem \ref{teoremaq}. The case $\mu>0\,.$}
In this section we assume that $\mu>0\,.$ Let us consider the
following system \be\label{reg1}-\Delta u-(p-2) \frac{\nabla
u\cdot\nabla\nabla u\cdot \nabla u}{(\mu+\,|\nabla u|)\,|\nabla u|}=
f\left(\mu+|\nabla\,u|\right)^{2-p}\,,\ee where we have used the
notation $\nabla u\cdot\nabla\nabla u\cdot \nabla u$ to denote the
vector whose $i^{\rm th}$ component is $\nabla u\cdot \left(
\pa_j\,\nabla u\right) \pa_j\,u_i=(\pa_l\,u_k)
\,(\pa^2_{j\,l}\,\,u_k)\,(\pa_j\,u_i)\,.$ Formally this system can
be obtained from system \eqref{NSC} by computing the divergence on
the left-hand side and then multiplying the equation by
$\left(\mu+|\nabla\,u|\right)^{2-p}$.\par%
It is immediate to verify that if $u$ is a sufficiently regular
solution of \eqref{reg1}, say $\,u \in\,W^{2,2}(\O)$, then $u$ is a
weak solutions of \eqref{NSC}. So, from the uniqueness of weak
solutions of \eqref{NSC}, it follows that to prove  Theorem
\ref{teoremaq} under the assumption $\,\mu>0\,$ it is sufficient to
prove the following result for strong solutions.
\begin{theorem}\label{teoremaqreg1}
Let $p\in (1,2]$ satisfy $\,(2-p)\,C_2<\,1\,$, where $C_2$ is given
by \eqref{ladaq}. Assume that $\mu> 0$. Let $f\in L^{r(q)}(\O)$ for
some $q>2$ and $q\not= n\,.$ Then, there is a strong solution $u\,
\in\, W^{2,q}(\O)$ of problem \eqref{reg1}--\eqref{diri}. Moreover,
the following estimate holds
\begin{equation}\label{dnqpf}
 \|u\|_{2,q}\leq C
 \,\left(\|f\|_q+\|f\|_{r(q)}^\frac{1}{p-1}\right)\,.
\end{equation}
\end{theorem}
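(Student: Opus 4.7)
The plan is to reduce Theorem~\ref{teoremaqreg1} to an a~priori estimate, obtained by reading \eqref{reg1} as a perturbation of the Poisson equation: the smallness $(2-p)C_2<1$ is used to absorb the second--order correction, and the remaining nonlinearity is closed via a H\"older--Sobolev argument calibrated to the exponent $r(q)$. Existence of the strong solution then follows from a fixed--point or continuation scheme built on this estimate.

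I would begin from the pointwise bound
\begin{equation*}
\left|(p-2)\frac{\nabla u\cdot\nabla\nabla u\cdot\nabla u}{(\mu+|\nabla u|)\,|\nabla u|}\right|\leq (2-p)\,|D^2 u|,
\end{equation*}
which holds because the matrix $\xi\otimes\xi/((\mu+|\xi|)|\xi|)$ has operator norm bounded by $1$. Taking the $L^q$--norm of \eqref{reg1} and applying \eqref{ladaq}, I would absorb the second--order term using $(2-p)C_2<1$ to reach
\begin{equation*}
\|D^2u\|_q\leq \frac{C_2}{1-(2-p)C_2}\,\|f(\mu+|\nabla u|)^{2-p}\|_q.
\end{equation*}
For the right--hand side I would use $(\mu+|\nabla u|)^{2-p}\leq \mu^{2-p}+|\nabla u|^{2-p}$ (valid since $2-p\in[0,1)$); the first summand contributes $C\|f\|_q$ uniformly for bounded $\mu$, and for the second, H\"older's inequality with $s=r(q)/q$ gives
\begin{equation*}
\|f\,|\nabla u|^{2-p}\|_q\leq \|f\|_{r(q)}\,\||\nabla u|\|_{(2-p)qs'}^{2-p}.
\end{equation*}
A direct computation from the definition of $r(q)$ yields $(2-p)qs'=nq/(n-q)=q^{*}$ in the regime $q\in(2,n)$ (and $s'=\infty$ if $q>n$), so Sobolev's embedding $W^{1,q}(\O)\hookrightarrow L^{q^{*}}(\O)$ together with \eqref{lad1q} produces $\|f\,|\nabla u|^{2-p}\|_q\leq C\|f\|_{r(q)}\|u\|_{2,q}^{2-p}$. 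The $W^{2,q}$--bound then reduces to the scalar inequality $\|u\|_{2,q}\leq A\|u\|_{2,q}^{2-p}+B$ with $A=C\|f\|_{r(q)}$ and $B=C(1+\|f\|_q)$, and since $2-p<1$ Young's inequality closes the loop and delivers \eqref{dnqpf}.

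For existence I would combine this a~priori estimate with a fixed--point argument. Since $\mu>0$, system \eqref{reg1} is a uniformly elliptic quasilinear system with smooth coefficients; a convenient implementation is the continuation method applied to $-\Delta u + t(p-2) a_{jl}(\nabla u)\partial^2_{jl}u = tf(\mu+|\nabla u|)^{2-p}$, $t\in[0,1]$, with $a_{jl}(\xi)=\xi_j\xi_l/((\mu+|\xi|)|\xi|)$. The a~priori estimate yields the uniform bounds needed for closedness, while the implicit function theorem in $W^{2,q}$ provides openness, so the set of admissible $t$ is all of $[0,1]$.

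The main obstacle is structural rather than technical: the assumption $(2-p)C_2<1$ is precisely what permits the perturbation of $-\Delta$ to be absorbed, and it is the only restriction on $p$ in the argument; since $C_2\sim q$ by \eqref{yud}, this forces $p\to 2$ as $q\to\infty$. By contrast, the H\"older--Sobolev step is clean: the exponent $r(q)$ is designed exactly so that Sobolev's embedding of $|\nabla u|$ into $L^{q^{*}}(\O)$ matches a factor $|\nabla u|^{2-p}$ paired with data $f\in L^{r(q)}(\O)$, and the existence step, though technical, is routine given the a~priori estimate.
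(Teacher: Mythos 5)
Your a priori estimate is, in substance, the computation the paper performs: the pointwise bound $(2-p)|D^2u|$ for the quasilinear term (the paper proves the needed inequality in the Appendix), the splitting $(\mu+|\nabla u|)^{2-p}\le 1+|\nabla u|^{2-p}$, the H\"older step pairing $f\in L^{r(q)}$ with $\|\nabla u\|_{q^*}^{2-p}$ via \eqref{c7}--\eqref{cmha2}, and the absorption permitted by $(2-p)C_2<1$. The only organizational difference is that the paper runs this computation not on the solution itself but on the map $v\mapsto u=F(v)$ defined by the linear Poisson problem \eqref{rega1} with coefficients frozen at $v$, and uses it to show $F$ maps the ball $K=\{\|\Delta v\|_q\le R\}$ into itself; your Young-inequality closure of $\|u\|_{2,q}\le A\|u\|_{2,q}^{2-p}+B$ plays the same role as the paper's choice of $a$ with $1+2C_3^{2-p}a^{2-p}\le a\delta$. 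So far the two are equivalent.

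The genuine gap is in your existence step. First, the system is an $N$-system: the coefficient of $\partial^2_{jl}u_k$ in the $i$-th equation is $(p-2)(\partial_j u_i)(\partial_l u_k)/\big((\mu+|\nabla u|)|\nabla u|\big)$, not a scalar $a_{jl}(\nabla u)$ multiplying $\partial^2_{jl}u_i$; the smallness condition gives Legendre--Hadamard ellipticity, but for systems that does not by itself make the linearized operator an isomorphism on $W^{2,q}\cap W^{1,q}_0$ (no maximum principle, no automatic uniqueness for the linearization), so the ``openness'' half of the continuation method is unsupported. Second, the coefficients are \emph{not} smooth: $\xi\mapsto \xi_j\xi_l/\big((\mu+|\xi|)|\xi|\big)$ is bounded and continuous but fails to be differentiable at $\xi=0$ (its blow-up at the origin is $0$-homogeneous), so the implicit function theorem in $W^{2,q}$ cannot be invoked along any solution whose gradient vanishes somewhere --- which Dirichlet solutions typically do. The paper circumvents both issues by using a Schauder-type fixed point (Theorem \ref{fixo}) for the frozen-coefficient map $F$: this requires only that $F(K)\subset K$ (your estimate), compactness of $W^{2,q}\hookrightarrow L^q$, and a weak-to-strong continuity property of $F$ (Proposition \ref{opo}, proved by a.e.\ convergence of gradients plus dominated convergence), none of which needs differentiability of the nonlinearity or invertibility of a linearization. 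To repair your argument you would either have to switch to such a compactness-based fixed point, or regularize the coefficients and justify the passage to the limit --- at which point you are reconstructing the paper's proof.
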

\par In the sequel we appeal to the following fixed point theorem in
order to prove Theorem \ref{teoremaqreg1}.

\begin{theorem}\label{fixo}
Let $X$ be a reflexive Banach space and $K$ a non-empty, convex,
bounded, closed subset of $X$. Let $F$ be a map defined in $K$, such
that $F(K) \subset \,K$.\par%
Assume that there is a Banach space $Y$ such that:\par%
i) $X \subset \,Y\,,$ with compact (completely continuous) immersion.\par%
ii) If $v_n \in \,K$ converges weakly in $X$ to some $v \in K$ then
there is a subsequence  $v_m$ such that $F(v_m) \rightarrow
\,F(v)\,$ in $Y$.\par%
Under the above hypotheses the map $F$ has a fixed point in $K$.
\end{theorem}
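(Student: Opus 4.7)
The plan is to derive Theorem \ref{fixo} from the classical Schauder fixed point theorem applied in the Banach space $Y$. Reflexivity of $X$, together with the compact immersion $X \hookrightarrow Y$, effectively converts the weak sequential structure on $K$ into a strong sequential structure in $Y$, and hypothesis (ii) is tailored so that $F$ becomes continuous in this induced topology on $K$.

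First, I would verify that $K$, regarded as a subset of $Y$, is nonempty, convex, and compact. Convexity is preserved since it is a purely algebraic property. For compactness, boundedness of $K$ in $X$ and the compact immersion give that $K$ is relatively compact in $Y$. To show that $K$ is closed in $Y$, I would take any $v_n \in K$ with $v_n \to v$ in $Y$: since $\{v_n\}$ is bounded in the reflexive space $X$, a subsequence converges weakly in $X$ to some $w$, and Mazur's theorem (convex norm-closed sets in $X$ are weakly closed) yields $w \in K$; the compact immersion then forces the same subsequence to converge to $w$ strongly in $Y$, so $w = v \in K$.

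Next, I would show that $F : K \to K$ is continuous for the norm topology of $Y$. Given $v_n \to v$ in $Y$ with all terms in $K$, I would verify that every subsequence of $(F(v_n))$ has a further subsequence converging in $Y$ to $F(v)$, which is enough to conclude $F(v_n) \to F(v)$. For any such subsequence $v_{n_k}$, reflexivity and boundedness of $K$ yield a weakly convergent sub-subsequence $v_{n_{k_j}} \rightharpoonup w$ in $X$ with $w \in K$ (again by Mazur); the compact immersion produces $v_{n_{k_j}} \to w$ in $Y$, forcing $w = v$; hypothesis (ii) then supplies a further subsequence $v_m$ with $F(v_m) \to F(v)$ in $Y$.

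With $K \subset Y$ nonempty, convex, and compact, and $F$ continuous on $K$ in the $Y$-topology, Schauder's fixed point theorem produces a $u \in K$ with $F(u) = u$. The main technical hurdle is the passage from the subsequence-only continuity assumption (ii) to bona fide $Y$-continuity of $F$ on $K$; this is precisely what the diagonal-extraction argument above handles, while all other ingredients reduce to standard applications of reflexivity, Mazur's theorem, and the compact embedding.
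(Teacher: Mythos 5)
Your proposal is correct and follows essentially the same route as the paper: identify $K$ as a convex compact subset of $Y$ (via reflexivity, the compact embedding, and weak closedness of convex norm-closed sets), upgrade hypothesis (ii) to genuine $Y$-continuity of $F$ on $K$ by the standard subsequence-of-a-subsequence argument, and conclude with Schauder's fixed point theorem. The only cosmetic difference is that the paper identifies the weak $X$-limit with the strong $Y$-limit through the continuous immersion and uniqueness of limits, whereas you re-run the compact embedding on the weakly convergent subsequence; both are equivalent.
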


For the proof and some comments see section \ref{pointf}.\par%
In the sequel we appeal to the above theorem with $X=\,W^{2,\,q}\,$
and $\,Y=\,L^{q}$. Clearly, point i) in Theorem \ref{fixo} holds.

\vspace{0.2cm}

\begin{proof}[Proof of Theorem \ref{teoremaqreg1}]
For any $v\,\in\, W^{2,q}\cap W_0^{1,q}\,$ we define
$\,C_3=\,C_3(q)$ by \be\label{c7}\ba{ll}\vspace{1ex}\displaystyle \|
\nabla\,\,v\|_{q^*}\leq \,C_3\,\| \Delta\,v\|_q\,, \mbox { if } q\in
(2,\,n),\\
\dy \| \nabla\,\,v\|_{\infty}\leq  \,C_3\,\| \Delta\,v\|_q\,, \mbox
{ if } q\in (n,+\infty). \ea\ee These estimates can be easily
obtained by applying the Sobolev embeddings and then using estimate
\eqref{lad1q}. \par %
Define $\,\delta\,$ by
$$\delta=\,1-\,(2-\,p)\,C_2\,,$$
where $C_2$ is given by \eqref{ladaq}, and fix a positive real $a$
by
$$
1+\,2\,C_3^{2-\,p} \,a^{2-\,p}\leq \,a\,\delta\,.
$$
Note that, under our assumptions, $\delta>0$. It is worth noting
that $\,\delta\,$, $\,a\,$ and $\,b\,$ are
constants of type $C$\,.\par%
 Define
$$
K=\{ v\in\,W^{2,\,q}(\O)\,:\,\| \Delta\,v\|_q \leq\,R\,,\, v=0\
\mbox{ on } \pa\O\}\,,
$$
where
$$
R=\,a\,\left(\|f\|_q+\,\|f\|_{r(q)}^\frac{1}{p-1}\,\right)\,.
$$
Let $f\in L^{r(q)}$ be given. For each  $v\in\,K$ define $u=\,F(v)$ as being the solution to the linear problem%
\be\label{rega1}\left\{\ba{ll}%
\dy -\Delta u=(p-2) \frac{\nabla v\cdot\nabla\nabla v\cdot \nabla
v}{(\mu+\,|\nabla v|)\,|\nabla
v|}+f\left(\mu+|\nabla\,v|\right)^{2-p}\,,\ \mbox{ in } \O\,,\\
u=0\,, \ \mbox{ on }\pa\O\,.
\ea\right .\ee%
To apply Theorem \ref{fixo} we start by showing that $F(K)
\subset\,K\,$. Note that if the right-hand side of \eqref{rega1}
belongs to $L^q(\O)$, from well known results on the Poisson
equation, there exists a unique $u\in W^{2,q}(\O)$ solving the
Dirichlet problem \eqref{rega1}.
For $q\in (2,\,n)$, by using \eqref{c7}$_1$ we have%
$$\|\,|\nabla v|^{2-p}\,f\|_q\leq \dy\|\,\nabla
v\,\|_{q^*}^{2-p}\, \|\,f\|_{r(q)} \, \leq\,C_3^{2-\,p}\,\|\,\Delta
v\,\|^{2-\,p}_q\,\|\,f\|_{r(q)}\,.$$
For $q>n$, by using \eqref{c7}$_2\,,$ and by recalling that
$r(q)=q\,$ if $q>\,n\,,$ we have
$$
\|\,|\nabla v|^{2-p}\,f\|_q\leq \dy\|\,\nabla v\,\|_{\infty}^{2-p}\,
\|\,f\|_{q} \, \leq\,C_3^{2-\,p}\,\|\,\Delta
v\,\|^{2-\,p}_q\,\|\,f\|_{q}\,.$$
So, in both the cases, \be\label{cmha2}\|\,|\nabla
v|^{2-p}\,f\|_q\leq \,C_3^{2-\,p}\,\|\,\Delta
v\,\|^{2-\,p}_q\,\|\,f\|_{r(q)}\,.\ee%
Therefore, since the first term on the right-hand side of
\eqref{rega1} obviously belongs to $L^q(\O)$, there exists a unique
$u\in W^{2,q}(\O)$ solving the Dirichlet problem \eqref{rega1}.
\par It remains to show that $u$ satisfies the estimate $\|\Delta\, u\|_q\leq
R\,$. We multiply both sides of equation \eqref{rega1} by $-\,\Delta
u\,|\Delta u|^{q-2}$, and integrate in $\Omega$. We get (for details
see the appendix)
$$\int_{\O}|\,\Delta u\,|^q\,dx\\ \dy \hfill\leq (2-p)\int_{\O}|\,D^2
v\,|\,|\,\Delta u\,|^{q-1}\,dx+\int_{\O}(\,\m+\,|\nabla
v|\,)^{2-p}\,|\,f\,|\,|\,\Delta u\,|^{q-1}\,dx\,. $$%
The H\"{o}lder's inequality and the inequality $(\m+\,|\nabla
v|)^{2-p}\leq 1+\,|\nabla v|^{2-p}$ yield
\be\label{lap1w}\ba{ll}\vs1\dy \|\,\Delta u\,\|_q^q\,\leq
&\dy\!\!(2-p)\,\|\,D^2v\,\|_q \|\,\Delta u\,\|_q^{q-1}\\\hfill&\dy
+\,\|\,f\,\|_q\|\,\Delta u\,\|_q^{q-1}+ \|\,|\nabla
v|^{2-p}\,f\,\|_q\|\,\Delta u\,\|_q^{q-1}\,,\ea \ee and, by dividing
both sides by $\|\,\Delta u\,\|_q^{q-1}$,  one has%
\be\label{lap122we} \|\,\Delta u\,\|_q\leq (2-p)\,\|\,D^2\,v\,\|_q
+\,\|\,f\,\|_q+\|\,|\nabla v|^{2-p}\,f\,\|_q. \ee%
Let us estimate the last term on the right-hand side of
\eqref{lap122we}. Since $v\in K$, one has \be\label{del2p}
 \| \Delta\,v\|^{2-\,p}_q
\leq\,a^{2-\,p}\,\left(\|f\|^{2-\,p}_q+\,\|f\|_{r(q)}^\frac{2-\,p}{p-1}\right)\,.
\ee Hence, from \eqref{cmha2}, by using \eqref{del2p} and
$$
\|f\|^{2-\,p}_q \,\|f\|_{r(q)} \leq\,\|f\|_q
+\,\|f\|^{\frac{1}{p-\,1}}_{r(q)}\,,
$$
one gets $$\|\,|\nabla v|^{2-p}\,f\|_q\leq
C_3^{2-\,p}\,a^{2-\,p}\,\left(\,\|f\|_q +
2\,\|f\|^{\frac{1}{p-\,1}}_{r(q)}\right)\,.
$$
Therefore \eqref{lap122we} becomes \be\label{esdel}\|\,\Delta
u\,\|_q\leq \left( (2-p)\,C_2\,\,a\,
+1+\,2\,C_3^{2-\,p}\,a^{2-p}\right)\left(\|f\|_q+\,\|f\|_{r(q)}^\frac{1}{p-1}\,\right)\,,\ee
where we have appealed to \eqref{ladaq}. Finally from the definition
of $\delta$, it readily follows that $\,u \in\,K\,$. So, $F(K)
\subset\,K\,.$

\vspace{0.2cm}

To end the proof of Theorem \ref{teoremaqreg1} it is sufficient to
show the following result (which corresponds to point ii) in Theorem
\ref{fixo}).
\begin{proposition}\label{opo}
Let $v_n \rightharpoonup\,v\,$ weakly in $W^{2,\,q}\,,$ where $v_n
\in \,K\,$. If $\,u_n=\,F(v_n)$ are the solutions to the problem
\begin{equation}\label{reg123n}%
-\Delta u_n=(2-p) \frac{\nabla v_n \cdot\nabla\nabla v_n\cdot \nabla
v_n}{(\mu+\,|\nabla v_n|)\,|\nabla v_n|}+
f\left(\mu+|\nabla\,v_n|\right)^{2-p}\,,%
\end{equation}
then there is a subsequence $v_m$ of $v_n$ such that $u_m=\,F(v_m)
\rightarrow \,u$ in $Y=\,L^q\,,$ where $u=\,F(v)$.%
\end{proposition}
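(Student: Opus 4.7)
The strategy is to pass to the limit in the Poisson equation \eqref{reg123n} for $u_n$ and then invoke compactness of $W^{2,q}(\O)\hookrightarrow L^q(\O)$. From the weak convergence $v_n\rightharpoonup v$ in $W^{2,q}$, the compact embedding $W^{2,q}(\O)\hookrightarrow W^{1,q}(\O)$ (Rellich--Kondrachov) furnishes, along a (non-relabelled) subsequence, strong convergence $v_n\to v$ in $W^{1,q}(\O)$ and, up to a further subsequence, $\nabla v_n\to\nabla v$ almost everywhere in $\O$. Moreover, the continuous Sobolev embedding $W^{2,q}\hookrightarrow W^{1,q^*}$ (respectively $W^{1,\infty}$ if $q>n$) gives a uniform $L^{q^*}$-bound on $\nabla v_n$ (resp.\ a uniform $L^\infty$-bound). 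The second derivatives $D^2 v_n$, however, remain only weakly convergent in $L^q$.

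I decompose the right-hand side of \eqref{reg123n} as
\[
g_n := (2-p)\,A_n : D^2 v_n + f\,B_n, \qquad A_n := \frac{\nabla v_n\otimes\nabla v_n}{(\m+|\nabla v_n|)\,|\nabla v_n|},\qquad B_n := (\m+|\nabla v_n|)^{2-p},
\]
with the convention $A_n\equiv 0$ on $\{\nabla v_n=0\}$. From the uniform bound $|A_n|\le \min\{1,|\nabla v_n|/\m\}\le 1$ and a.e.\ convergence, dominated convergence yields $A_n\to A$ in every $L^r(\O)$, $r<\infty$, where $A$ is built from $\nabla v$ in the same way. I claim $g_n\rightharpoonup g:=(2-p)\,A:D^2 v+f(\m+|\nabla v|)^{2-p}$ weakly in $L^q(\O)$. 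For the trilinear piece, any test function $\phi\in L^{q'}(\O)$ satisfies $A_n\phi\to A\phi$ strongly in $L^{q'}$ by dominated convergence ($|A_n\phi|\le|\phi|\in L^{q'}$); pairing this strong-in-$L^{q'}$ convergence with the weak-in-$L^q$ convergence $D^2 v_n\rightharpoonup D^2 v$ gives $\int_\O (A_n:D^2 v_n)\,\phi\,dx\to \int_\O(A:D^2 v)\,\phi\,dx$. For the piece $fB_n$, if $q>n$ the uniform $L^\infty$-bound on $\nabla v_n$ together with dominated convergence yields $fB_n\to fB$ strongly in $L^q$; if $q<n$, Vitali's theorem applies, with the required equi-integrability of $|f|^q B_n^q$ following from the H\"older argument behind \eqref{cmha2} together with the algebraic identity $(2-p)\,q\cdot r(q)/(r(q)-q)=q^*$ (a direct computation from \eqref{rq}), which bounds $|\nabla v_n|^{(2-p)q}$ in $L^{r(q)/(r(q)-q)}(\O)$ and reduces the matter to absolute continuity of the $L^{r(q)}$-integral of $f$.

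To conclude, since $u_n=F(v_n)\in K$, estimate \eqref{ladaq} and Poincar\'e's inequality give $\|u_n\|_{W^{2,q}}\le CR$; extracting a further subsequence, $u_n\rightharpoonup u^*$ weakly in $W^{2,q}(\O)$ with $u^*|_{\pa\O}=0$, so $-\Delta u_n\rightharpoonup-\Delta u^*$ weakly in $L^q$. Matching with $-\Delta u_n=g_n\rightharpoonup g$ yields the Dirichlet problem $-\Delta u^*=g$, $u^*=0$ on $\pa\O$, whose unique solution is $F(v)=u$, hence $u^*=u$. The compact embedding $W^{2,q}(\O)\hookrightarrow L^q(\O)$ then upgrades weak $W^{2,q}$-convergence to strong $L^q$-convergence, so $u_n\to u$ in $L^q(\O)$, which is exactly the required conclusion. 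The main technical difficulty lies in the passage to the limit in the trilinear product $A_n:D^2 v_n$, since $D^2 v_n$ converges only weakly in $L^q$ and the coefficient $A_n$ degenerates on the set $\{\nabla v_n=0\}$; the uniform bound $|A_n|\le 1$ (with the convention $A_n=0$ there) and the dominated-convergence step on the dual pairing are what make this step go through.
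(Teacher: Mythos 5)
Your proposal is correct and follows the same overall skeleton as the paper's proof: extract a weakly $W^{2,q}$-convergent subsequence of $u_n$, pass to the limit in the right-hand side of \eqref{reg123n} by pairing strong convergence of the uniformly bounded coefficient with weak $L^q$ convergence of $D^2v_n$, identify the limit equation so that $u^*=F(v)$, and upgrade to strong $L^q$ convergence by compactness. The differences are in how three sub-steps are executed. For the strong convergence of the coefficient $A_n$, you use dominated convergence directly (exploiting $|A_n|\leq\min\{1,|\nabla v_n|/\mu\}$ and a.e.\ convergence of the gradients), whereas the paper's Lemma \ref{limum} takes a slightly longer route via weak $L^t$ convergence plus convergence of norms (Radon--Riesz); your version is more elementary and also handles the degeneracy on $\{\nabla v_n=0\}$ explicitly. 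For the product term, your device of moving $A_n$ onto the test function $\phi\in L^{q'}$ is essentially the paper's decomposition $g_mh_m-gh=g(h_m-h)+(g_m-g)h_m$ in Lemma \ref{marras}, but it yields weak convergence in $L^q$ itself rather than only in $L^s$ for $s<q$. For the term $f(\mu+|\nabla v_n|)^{2-p}$, the paper uses the Lipschitz bound $|(\mu+s)^{2-p}-(\mu+t)^{2-p}|\leq\frac{2-p}{\mu^{p-1}}|s-t|$, which is shorter but degenerates as $\mu\to0$; your Vitali/equi-integrability argument (whose exponent identity $(2-p)q\,r(q)/(r(q)-q)=q^*$ does check out against \eqref{rq}) is $\mu$-uniform and hence more robust, though the extra generality is not needed here since the whole section assumes $\mu>0$. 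Both arguments are sound; yours trades a little extra bookkeeping in the $f$-term for simpler convergence lemmas elsewhere.
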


In the sequel we use the label \eqref{reg123n}$_l$ to mean that the
sequences $u_n$ and $v_n$ are replaced by subsequences $u_l$ and
$v_l $ respectively. For instance we can denote identity
\eqref{reg123n} also by \eqref{reg123n}$_n$.\par%
Since $u_n\in K$, there is a subsequence $u_k$ and an element $u
\in\,K$ such that $u_k \rightharpoonup \,u$ weakly in $W^{2,\,q}$
(since this space is reflexive). In particular $\,-\Delta\,u_k
\rightharpoonup \,-\Delta\, u$ weakly in $L^q\,.$ Moreover, $u_k
\rightarrow \,u$ strongly in $W^{s,\,q}$, for each $s<\,2\,,$ hence
$u_k \rightarrow \,u$ in $Y=\,L^q\,$.\par%
The proof is accomplished by showing that one can pass to the limit
in \eqref{reg123n}$_m$, along subsequences $u_m$ and $v_m$, to
obtain
\be\label{reg123}%
-\Delta u=(p-2) \frac{\nabla v \cdot\nabla\nabla v\cdot \nabla
v}{(\mu+\,|\nabla v|)\,|\nabla v|}+
f\left(\mu+|\nabla\,v|\right)^{2-p}\,.%
\ee%
To prove the proposition it is sufficient to consider the equation
\eqref{reg123n}$_k$ and to show that there is a subsequence $v_m$ of
$v_k$ such that each of the two terms in the right hand side of
\eqref{reg123n}$_m$ converge, in the distributional sense, to the
corresponding terms in equation \eqref{reg123}. This verification
would be quite immediate. However, we rather prefer to prove the
convergence in a topology stronger than the distributional one.

\vspace{0.2cm}

For convenience we set
$$
A(w)=:\, \frac{\partial_1 w \, \partial_2 w}{(\mu+\,|\nabla
w|)\,|\nabla w|}\,,
$$
where $\,\partial_1 w\,$ and $ \,\partial_2 w\,$ denotes any couple
of arbitrary, fixed, partial derivatives.
\begin{lemma}\label{limum}
There is a subsequence $v_m$ of $v_k$ such that
$$
A(v_m)  \rightarrow \,A(v)\,,
$$
strongly in $L^t$, for each  $t>\,1$.%
\end{lemma}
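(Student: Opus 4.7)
The plan is to reduce the $L^t$ convergence to pointwise convergence almost everywhere, combined with a uniform $L^\infty$ bound, so that dominated convergence applies.

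First, I would observe that the integrand $A$ is a bounded continuous function of the gradient. Writing $\xi = \nabla w$, the map
\[
G(\xi) := \frac{\xi_1\,\xi_2}{(\mu+|\xi|)\,|\xi|}
\]
(here $\xi_1,\xi_2$ stand for the two fixed components involved in $\partial_1 w,\partial_2 w$) satisfies $|G(\xi)|\le \frac{|\xi|}{\mu+|\xi|}\le 1$ for all $\xi\neq 0$, and the estimate $|G(\xi)|\le |\xi|/\mu$ near the origin shows that $G$ extends continuously to all of $\mathbb{R}^{Nn}$ by $G(0)=0$. Since $\mu>0$, $G$ is in fact continuous and bounded on the whole space.

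Next, the weak convergence $v_k\rightharpoonup v$ in $W^{2,q}(\Omega)$, together with the Rellich--Kondrachov compact embedding $W^{2,q}(\Omega)\hookrightarrow W^{1,q}(\Omega)$ (valid since $\Omega$ is bounded and smooth), yields, after extracting a subsequence, the strong convergence $v_k\to v$ in $W^{1,q}(\Omega)$, hence $\nabla v_k\to \nabla v$ in $L^q(\Omega;\mathbb{R}^{Nn})$. Passing to a further subsequence, still denoted $v_m$, I obtain $\nabla v_m(x)\to\nabla v(x)$ for almost every $x\in\Omega$.

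By continuity of $G$, the almost everywhere convergence of gradients gives
\[
A(v_m)(x) = G(\nabla v_m(x)) \longrightarrow G(\nabla v(x)) = A(v)(x) \quad \text{for a.e.\ } x\in\Omega.
\]
Because $|A(v_m)|\le 1$ uniformly and $|\Omega|<\infty$, the constant function $1$ is an integrable dominant on $\Omega$ for every exponent $t<\infty$. Lebesgue's dominated convergence theorem then gives $A(v_m)\to A(v)$ in $L^t(\Omega)$ for every $t\in[1,\infty)$, which in particular covers all $t>1$.

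There is no real obstacle in this argument; the only subtlety worth underlining is the definition of $A$ on the set where $\nabla w=0$, which requires the observation that $G$ extends continuously through the origin (using $\mu>0$). This is why the uniform pointwise bound $|A(\cdot)|\le 1$ is available and why the dominated convergence argument closes immediately.
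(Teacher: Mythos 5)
Your proof is correct, and its first half (extracting a subsequence with $\nabla v_m\to\nabla v$ a.e.\ via the compact embedding $W^{2,q}\hookrightarrow W^{1,q}$, then using continuity of the bounded function $G$, including the continuous extension by $0$ at $\xi=0$ thanks to $\mu>0$) coincides with the paper's argument. Where you diverge is in the concluding step: you apply Lebesgue's dominated convergence theorem directly to $|A(v_m)-A(v)|^t\le 2^t$ on the bounded domain $\Omega$, which immediately yields strong $L^t$ convergence. The paper instead takes a longer route: it first deduces weak convergence of $A(v_m)$ in $L^t$ from reflexivity, identifies the weak limit as $A(v)$ via the a.e.\ convergence (Lions, Lemma 1.3), proves convergence of the norms $\|A(v_m)\|_t\to\|A(v)\|_t$ by dominated convergence, and then invokes the Radon--Riesz property (weak convergence plus norm convergence implies strong convergence in $L^t$, $1<t<\infty$) from Riesz--Nagy. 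Your shortcut is strictly more elementary and avoids both auxiliary theorems; since the uniform pointwise bound $|A(\cdot)|\le 1$ is available and $|\Omega|<\infty$, nothing is lost, and your version even makes explicit the continuity of $G$ at the origin, a point the paper passes over in silence.
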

\begin{proof}
Since, in particular, $v_k \rightarrow \,v$ in $\,W^{1,q}\,$, it
follows, by a classical result, that almost everywhere convergence
of the gradient in $\Omega$ also holds, for some $v_m$. So,
$\,A(v_m) \rightarrow \,A(v)\,$, a.e. in $\Omega$. Further,
$\,|\,A(v_m(x)\,)\,|^t\, \leq\,1\,,$ point-wisely. It follows, from
the reflexivity of $L^t$, that $\,A(v_m)$ is weakly convergent in
$L^t\,.$ Due to the a.e. convergence, see \cite{lions}, chap. I,
Lemma 1.3, the weak limit is just $A(v)$. So,
$$
A(v_m)  \rightarrow \,A(v)\,,
$$
weakly in $L^t$, for each finite $t$. This last property, together
with $\,\|\,A(v_m)\,\|_t  \rightarrow \,\|\,A(v)\|_t\,$ implies
strong convergence, thanks to a classical theorem, see \cite{riesz}
(Chap.2, n. 37). The above norm-convergence follows by appealing to
Lebesgue's dominated convergence theorem.
\end{proof}
Next, we prove that each of the two terms in the right hand side of
\eqref{reg123n}$_m$ converge to the corresponding terms in equation
\eqref{reg123}. We start by the first term. Each single addend has
the form $\,A(v_m)\,\partial^2\,v_m\,$, where $\,\partial^2\,w\,$
denotes an arbitrary, fixed, second order derivative. We prove the
following result.
\begin{lemma}\label{marras}
One has
$$
A(v_m)\,\partial^2\,v_m \rightharpoonup \,A(v)\,\partial^2\,v
$$
weakly in  $L^s$, for each $\,s<\,q\,$.%
\end{lemma}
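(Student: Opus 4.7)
The plan is to test the sequence $A(v_m)\,\partial^2 v_m$ against an arbitrary $\phi \in L^{s'}(\O)$, with $s'=s/(s-1)$, and show that
$$
\int_\O A(v_m)\,\partial^2 v_m\,\phi\,dx\ \longrightarrow\ \int_\O A(v)\,\partial^2 v\,\phi\,dx\,.
$$
I would rely on three ingredients: (a) the pointwise bound $|A(w)|\leq 1$, which is immediate from the definition of $A$; (b) the boundedness of $\partial^2 v_m$ in $L^q(\O)$ together with the weak convergence $\partial^2 v_m \rightharpoonup \partial^2 v$ in $L^q(\O)$, which follows from $v_n\in K\subset W^{2,q}(\O)$ and $v_n\rightharpoonup v$ weakly in $W^{2,q}$; and (c) the strong convergence $A(v_m)\to A(v)$ in every $L^t(\O)$, $t>1$, provided by Lemma \ref{limum}.

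The natural split is
$$
\int_\O (A(v_m)\partial^2 v_m - A(v)\partial^2 v)\,\phi\,dx \ =\ I_m\,+\,J_m\,,
$$
with
$$
I_m=\int_\O (A(v_m)-A(v))\,\partial^2 v_m\,\phi\,dx\,,\qquad J_m=\int_\O A(v)\,(\partial^2 v_m-\partial^2 v)\,\phi\,dx\,.
$$
For $I_m$ I apply H\"older with three exponents $(t,q,s')$ satisfying $1/t+1/q+1/s'=1$, which forces $1/t=1/s-1/q$. Since $s<q$ this gives a finite $t=qs/(q-s)>1$, so
$$
|I_m|\,\leq\,\|A(v_m)-A(v)\|_t\,\|\partial^2 v_m\|_q\,\|\phi\|_{s'}\,,
$$
and the right-hand side tends to $0$ by Lemma \ref{limum} and the uniform bound on $\|\partial^2 v_m\|_q$.

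For $J_m$ I use $|A(v)|\leq 1$ together with $s'>q'$ (again because $s<q$ and $\O$ is bounded) to conclude $A(v)\,\phi\in L^{q'}(\O)$. Then $J_m\to 0$ is just the weak convergence $\partial^2 v_m\rightharpoonup \partial^2 v$ in $L^q(\O)$ tested against $A(v)\phi$. Combining these, $I_m+J_m\to 0$ for every $\phi\in L^{s'}(\O)$, which is exactly weak convergence in $L^s(\O)$. The only conceptual point (not really an obstacle, but the reason the statement is restricted to $s<q$) is that the Hölder argument for $I_m$ demands strict inequality $s<q$ in order to leave room for the third factor $t>1$; at the endpoint $s=q$ the same argument would require $\|A(v_m)-A(v)\|_\infty\to 0$, which is not available from Lemma \ref{limum}.
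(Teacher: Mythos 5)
Your proof is correct and follows essentially the same route as the paper: the identical splitting $A(v_m)\partial^2 v_m - A(v)\partial^2 v = (A(v_m)-A(v))\,\partial^2 v_m + A(v)\,(\partial^2 v_m - \partial^2 v)$, the same H\"older exponent $t=qs/(q-s)$ combined with Lemma \ref{limum} for the first piece, and the boundedness of $A$ plus weak $L^q$ convergence of the second derivatives for the second. The only cosmetic difference is that you test both pieces against a single $\phi\in L^{s'}$, whereas the paper establishes weak $L^q$ convergence of one term and strong $L^s$ convergence of the other before combining them.
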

\begin{proof}
Set $\,\,g=\,A(v)\,,$ $\,g_m=\,A(v_m)\,,$ $\,h=\,\partial^2\,v\,,$
and  $\,h_m=\,\partial^2\,v_m\,.$ Clearly, $\,h_m \rightharpoonup
\,h\,$ weakly in $\,L^q\,.$ Moreover, by the previous lemma, $\,g_m
\rightarrow\,g\,$ strongly in $\,L^t\,$,
$\,t=:\,\frac{q\,s}{q-\,s}\,$. Moreover,  and $\,h_m \rightharpoonup
\,h\,$ weakly in $\,L^q\,.$\par%
Write%
\begin{equation}\label{milos}%
g_m\,h_m -\,g\,h=\,g\,(h_m-\,h)+\,(g_m-\,g)\,h_m\,,%
\end{equation}
and let $\,\phi \in\,L^{q'}\,$. Since $\,g(x)\,$ is bounded it
follows that  $\,g\,\phi \in\,L^{q'}\,.$ So the quantity
$$
<\,g\,(h_m-\,h),\,\phi\,> =\,<\,(h_m-\,h),\,g\,\phi\,>
$$
goes to zero as $\,m \rightarrow\,\infty\,$. This proves the weak
convergence to zero, in $\,L^q\,,$ of the first term in the right hand side of \eqref{milos}.\par%
On the other hand, by H\"{o}lder's inequality,
$$
\|\,(g_m-\,g)\,h\,\|^s_s
\leq\,\|\,g_m-\,g\,\|^s_{\frac{q\,s}{q-\,s}}\,\|\,h\,\|^s_q\,.
$$
This proves the strong convergence to zero, in $\,L^s\,,$ of the
second term in the right hand side of \eqref{milos}. In conclusion,
the first term in the right hand side of \eqref{reg123n}$_m$
converges to the first term in the right hand side of
\eqref{reg123}.\end{proof}%
Finally, the convergence of the second term in the right hand side
of \eqref{reg123n}$_m$ to the corresponding term in \eqref{reg123}
holds, since
$$
|\,(\mu+|\nabla\,v_m|\,)^{2-p} -\,(\mu+|\nabla\,v|\,)^{2-p}\,|
\leq\,\frac{2-\,p}{\mu^{p-\,1}}\,|\nabla\,v_m -\,\nabla\,v|\,.
$$
By Cauchy-Schwartz inequality
$$
\|\,f\,(\mu+|\nabla\,v_m|\,)^{2-p}
-\,f\,(\mu+|\nabla\,v|\,)^{2-p}\,\|_{  \frac{q}{2}}
\leq\,\frac{2-\,p}{\mu^{p-\,1}}\,\|\,f\,\|_q\, \|\,\nabla\,v_m
-\,\nabla\,v\,\|_q \,,
$$
and the right-hand side goes to zero thanks to the compact embedding
of $\,W^{2,\,q}\,$ in $\,W^{1,\,q}\,$.\par%
The solution $u$ obviously satisfies \eqref{dnqpf}, as $u\,\in\,
K\,$.
\end{proof}
\section{Proof of Theorem \ref{teoremaq}. The case $\mu =\,0\,.$}
In the previous step we have obtained estimates on the $L^q$-norm of
the second derivatives,  uniformly in $\mu\,$, $\,\mu >\,0\,.$ Let
us denote by $u_\mu$ the sequence of solutions of \eqref{NSC} for
the different values of $\mu>0$. We have shown that the sequence
$(u_\mu)$ is uniformly bounded in $W^{2,q}(\O)$. Therefore,
 there exists a vector field $u\in W^{2,q}(\O)$ and a
subsequence, which we continue to denote by $(u_\mu)$, such that
$(u_\mu)\rightharpoonup u$ weakly in $W^{2,q}(\O)$, and, by
Rellich's theorem, strongly in $W^{1,s}(\O)$, for any $\,s$ if
$\,q>\,n\,,$ and for $\,s<\,q^*\,$ if $\,q<\,n$. In particular
$(u_\mu)$ converges to $u$ strongly in $W^{1,p}(\O)$. Let us prove
that%
\be\label{a2}%
\int_{\Omega} \,\left(\,\mu+|\nabla u\,| \,\right)^{p-2}\,\,\nabla
u\, \cdot \nabla \varphi \,dx= \lim_{\mu\to 0^+}\left\{\int_{\Omega}
\,\left(\,\mu+|\nabla u_\mu|\,\right)^{p-2}\,\nabla u_\mu\, \cdot
\nabla\varphi \,dx\right\}\,,\ee for any $\varphi
\in\,W^{1,p}_0(\O)\,$. We recall the following well known estimate
(see, for instance, \cite{DER})
\be\label{tensorS1}|\,(\mu+|A|)^{p-2}A-(\mu+|B|)^{p-2}B| \leq\,
C\,\frac{|A-B|}{(\mu+|A|+|B|)}{\!\atop ^{{2-p}}}\,,\ee for any pair
$A$ and $B$ in $\R^{Nn}$, where $C$ is a positive constant
independent of $\mu$.\par%
By applying \eqref{tensorS1} and then H\"{o}lder's inequality, we
get%
\be\label{aaaa}\ba{ll}\vs1\dy \left|\int_{\Omega}
\,\left(\,\mu+|\nabla u\,| \,\right)^{p-2}\,\,\nabla u\, \cdot
\nabla \varphi \,dx -\,\int_{\Omega} \,\left(\,\mu+|\nabla
u_\mu|\,\right)^{p-2}\,\nabla u_\mu\, \cdot \nabla\varphi
\,dx\,\right|\\
\vs1\dy\leq C\,\int_{\Omega}\! \,\left(\,\mu+|\nabla u\,|+|\nabla
u_\mu|\,\right)^{p-2}\,|\,\nabla u-\nabla u_\mu\,|\,|\nabla
 \varphi|\, dx\,\\\vs1\dy
  \leq C\,\int_{\Omega}\! \,\left|\,\nabla u-\nabla
u_\mu\,\right|^{p-1}\,|\nabla
 \varphi|\, dx\,\leq C\,\|\,\nabla
u_\mu\!-\nabla u\,\|_p^{p-1}\,\|\,\nabla \varphi\,\|_p\,.\ea\ee The
right-hand side of the last inequality tends to zero, as $\mu$ goes
to zero, thanks to the strong convergence of $u_\mu$ to $u$ in
$W^{1,p}(\O)$. This proves \eqref{a2}. Finally, for each $\mu>0\,,$
the right-hand side of \eqref{a2} is equal to $\int_\O f\cdot
\vp\,dx\,.$ So, $u$  satisfies the integral identity \eqref{buf2aa}.
Hence $u$ is a weak solution of \eqref{NSC}, and belongs to
$\,W^{2,q}(\O)$. Finally, \eqref{dn} follows since
$\|\,\,u\,\|_{2,q}\leq\dy \liminf_{\mu\to 0^+} \|\,u_\mu\,\|_{2,q}$.

\vspace{0.2cm}

The Corollary \ref{corollaryq2} is an immediate consequence of
Theorem \ref{teoremaq}, by using the regularity of the domain and
the Sobolev embedding. \vskip0.2cm The results in Corollary
\ref{teorema} can be obtained by replacing in the proof of Theorem
\ref{teoremaq}, hence in the proof of Theorem \ref{teoremaqreg1},
the constant $C_2$ with the constant $C_1$. The last assertion in
Corollary  \ref{teorema} follows from the validity of \eqref{lad2},
for a smooth convex domain, with $C_1=1$. We omit further details.
\section{The fixed point theorem. Proof and remarks.}\label{pointf}

Theorem \ref{fixo} is a simplification of an idea introduced in
reference \cite{BVdiv} to prove existence of strong solutions to
initial boundary value problems for non-linear systems of evolution
equations, specially in Sobolev spaces. See the section 3, in the
above reference. Successively, the method has been applied with
success to many other problems, in particular to the compressible
Euler equations (see \cite{BV2}). Main requirements, in
applications, are the reflexivity of the Banach space $X$, and its
sufficiently strong topology. Shauder's fixed point theorem is
applied with respect to a quite arbitrary ``container space'' $Y$.
Roughly speaking, the above two properties allow us to trivialize
both compactness and continuity requirements, respectively. So, to
apply the theorem, the main point is to show that $F(K) \subset
\,K\,,$ for some convex, bounded, closed subset $K$.
\begin{proof}[Proof of Theorem \ref{fixo}]
Obviously $K$ is convex, bounded, and pre-compact in $Y$. \par Let
$y_n \in \,K$ converge to some $y$ in the $Y$ norm. We start by
showing that $K$ is closed, hence compact, in $Y$, and that the
sequence $\,y_n\,\rightharpoonup\, y$ weakly in $\,X\,$.
  Since $K$ is $X$-bounded, and $X$
is reflexive, there is a subsequence $y_m\,$ which is $\,X$-weakly
convergent to some $u \in \,X\,$. Since the immersion $ X \subset
\,Y$ is continuous, $y_m\,$ is also weakly convergent to $u$ in $Y$.
Since, by assumption, this sequence is strongly convergent in $Y$ to
$y$, it follows that $u=\,y$. Further, since convex sets in Banach
spaces are weakly closed if and only if they are strongly closed, it
follows that $\,y \in \,K\,.$ So, $K$ is $Y$-closed. Further, from
the uniqueness of the limit $\,y\,,$ we deduce that the whole
sequence $\,y_n\,$
converges weakly in $\,X\,$ to $\,y\,.$ \par%
Finally, to prove that $\,F(y_n)\rightarrow F(y)$ strongly in $Y$ it
is sufficient to show, by using standard arguments, that any
subsequence $y_k$ contains a subsequence $y_m$ such that $\,F(y_m)
\rightarrow F(y)$ strongly in $Y$. Obviously, $y_k\rightharpoonup\,
y $ weakly in $X$. By assumption \emph{ii)}, there is a subsequence
$y_m$ such that  $\,F(y_m)
\rightarrow\, F(y)\,$ strongly in $\,Y\,$. %
This shows that the map $\,F\,$ is continuous on $\,K\,$ with
respect to the $Y$ topology. So, Schauder's fixed point theorem
guarantees the
existence of, at least, one fixed point $\,y_0 \in K\,$, $F(y_0)=\,y_0\,$.\par%
\end{proof}

\section{Appendix}
Our aim is to prove the estimate $$|\,I\,|:=\left|\,\nabla v\cdot
\left( \pa_j\,\nabla v\right) (\pa_j\,v_i)\, \Delta v_i\,\right|\leq
\,|\nabla v|^2\, |D^2\,v|\,|\Delta v|\,.$$ In the sequel, for the
reader's convenience, we avoid the summation convention.\par%
We recall that
$$(\,D^2v_k)^2\,:=\sum_{j,h=1}
^n\!\!\left|\,\pa_{jh}^2\,v_k\,\right|^2 \quad\mbox{ and }\quad
|\,D^2v|^2:=\sum_{k=1}^N (\,D^2v_k)^2\,=\sum_{k=1}^N \sum_{j,h=1}
^n\!\!\left|\,\pa_{jh}^2\,v_k\,\right|^2.$$ We introduce the
$n$-vector $b$ and $N$-vector $w$, whose components are defined as
follows
$$b_j:=(\pa_j\,v)\cdot \, \Delta v\,, \quad w_k^2:=\sum_{j,h=1}^n\left(\,(\pa_h\,v_k)\,b_j\right)^2\,.$$
 The modulus of vector $b$ satisfies the following estimate:
 $$|\,b\,|^2=\sum_{j=1}^n\, b_j^2\,\leq \sum_{j=1}^n|\,\pa_j\,v|^2|\Delta v|^2=|\Delta v|^2
 \sum_{j=1}^n \sum_{i=1}^N(\,\pa_j\,v_i)^2=|\Delta v|^2|\nabla v|^2.$$
Hence \be\label{wk} w_k^2=\sum_{h=1}^n\,\left(\pa_h\,v_k\right)^2
\sum_{j=1}^n\,b_j^2=|\nabla v_k|^2|\Delta v|^2|\nabla v|^2\,.\ee
Moreover
$$\ba{ll}\vs1 \dy|\,I\,|=\left|\,\sum_{k=1}^N\sum_{j,h=1}^n\left(\pa_h\,v_k\right)
\left(\pa^2_{hj}\,v_k\right)\,b_j\right|\leq
\sum_{k=1}^N\left|\,\sum_{j,h=1}^n\left(\pa^2_{hj}\,v_k\right)\,(\pa_h\,v_k)\,b_j
\right|\\\hfill \dy \leq \sum_{k=1}^N\sqrt{
\sum_{j,h=1}^n\left(\pa^2_{hj}\,v_k\right)^2}\,\sqrt{\sum_{j,h=1}^n
\left(\,(\pa_h\,v_k)\,b_j\right)^2}\,,\ea$$ where, in the last step,
we have used that, for any pair of tensors $A$ and $B$,  there holds
$|A\cdot B|\leq |A|\,|B|$. Hence, by the above notations and
estimate \eqref{wk}, we get
$$\ba{ll}\vs1\dy|\,I\,|&\leq \dy \sum_{k=1}^N|D^2v_k|\,|w_k|\leq
\,|\Delta v|\,|\nabla v|\,\sum_{k=1}^N|D^2v_k|\,|\nabla v_k|
\\
\dy &\leq \dy\,|\Delta v|\,|\nabla
v|\sqrt{\sum_{k=1}^N|D^2v_k|^2}\,\sqrt{\sum_{k=1}^N|\nabla v_k|^2}=
\,|\Delta v|\,|\nabla v|^2\,|D^2v|\,,\ea$$ which is our thesis.
  \vskip 0.5cm
\indent
 {\bf Acknowledgments}\,:
The authors like to thank Professor M. Fuchs and Professor P.
Kaplick\'y for giving some interesting references.

\end{document}